\newtheorem{thm}{Theorem}[section]
\newtheorem{lem}[thm]{Lemma}
\newtheorem{claim}[thm]{Claim}
\newtheorem{rmk}[thm]{Remark}
\numberwithin{equation}{section}
\title {A note on Turán numbers and the Erdos-Stone-Simonovits theorem}
\author{Stefan Gobej}
\address{Vlaicu Voda High School\\
Curtea de Arges, Romania}
\email{stefangobej@gmail.com}
\date{\today}
\begin{document}

\begin{abstract}
Given a fixed graph $H$, we say that a graph $G$ is \textit{$H$-free} if $G$ does not contain $H$ 
as a subgraph. The Tur\'an number $ex(n, H)$ of $H$ is the maximum number of edges in 
an $n$-vertex $H$-free graph.  The study of Tur\'an number of graphs is a central topic in extremal graph theory. The purpose of this article is to present some well-known results about this field but also to prove the Erdős-Stone-Simonovits theorem in an original manner.

\end{abstract}

\maketitle

\tableofcontents

\section{Introduction}

In this short note, we survey a few classical results from extremal graph theory, with a particular focus on Turán-type problems. Our aim is to revisit foundational theorems and offer a fresh perspective on their proofs. Central to this note is a new proof we present for the Erdős–Stone–Simonovits theorem.

Traditionally, this theorem is proved using Szemerédi’s Regularity Lemma \cite{szemeredi1975}. Our approach avoids this, relying instead on more elementary combinatorial tools. We believe this alternative proof offers greater transparency and may be more accessible to readers unfamiliar with deeper machinery.

In addition to Erdős–Stone–Simonovits, we include discussions and proofs of Turán’s theorem and the Kővári–Sós–Turán theorem as well as the Bondy-Simonovits theorem.

\emph{Notation and terminology}
All graphs considered in this
paper are simple. We denote a simple graph by $G=(V(G),E(G))$ where $V(G)$ is the set of vertices and $E(G)$ is the set of edges. Let $\delta(G)$ and $\Delta(G)$ denote the minimum degree and maximum degree of $G$. For any subset $S$ of $V(G)$ we denote by $G\backslash S$ the subgraph induced by $V(G)\backslash S$. 
The chromatic number $\chi(G)$ of a graph $G$ is the smallest natural number $c$ such that the vertices of $G$ can be colored with $c$ colors and no two vertices of the same color are adjacent.

Throughout the paper, we use the standard Big O notation to describe asymptotic upper bounds. For a function $f(x)$
, we write $f(x) = O(g(x))$ if there exists some $C > 0$ such that $|f(x)|\leqslant  Cg(x) $ for all $x$.


\section{Avoiding cliques}

 The basic statement of extremal graph theory is Mantel’s theorem, proved in 1907\cite{mantel1907}, which states that any graph on $n$ vertices without a triangle (i.e. $K_3-$free)  has at most $\frac{n^2}{4}$ edges.


\begin{thm}[Mantel\cite{mantel1907}]\label{thm-Mantel}

If a graph $G$ with $n$ vertices contains no triangle, then it has at most $\frac{n^2}{4}$ edges:
\[ex(n,K_3)\leqslant \frac{n^2}{4}. \]
\end{thm}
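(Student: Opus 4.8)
The plan is to exploit the structural consequence of triangle-freeness at the level of vertex degrees, and then to convert the resulting degree inequality into the desired edge bound by a convexity (Cauchy--Schwarz) argument. The whole proof is elementary and self-contained, requiring no induction.

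First I would fix an arbitrary edge $uv \in E(G)$ and observe that, since $G$ contains no triangle, no vertex can be adjacent to both $u$ and $v$; hence the neighbourhoods $N(u)$ and $N(v)$ are \emph{disjoint} subsets of $V(G)$. Counting their vertices gives
\[
d(u) + d(v) = |N(u)| + |N(v)| = |N(u) \cup N(v)| \le n
\]
for every edge $uv$. Next I would sum this over all edges. Each vertex $w$ is an endpoint of exactly $d(w)$ edges, so it contributes $d(w)$ to the left-hand side $d(w)$ times, yielding the double-counting identity
\[
\sum_{w \in V(G)} d(w)^2 = \sum_{uv \in E(G)} \big(d(u) + d(v)\big) \le n\,|E(G)|.
\]

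To close the argument I would bound the left-hand side from below using Cauchy--Schwarz together with the handshake identity $\sum_{w} d(w) = 2|E(G)|$:
\[
\sum_{w \in V(G)} d(w)^2 \ge \frac{1}{n}\Big(\sum_{w \in V(G)} d(w)\Big)^2 = \frac{4\,|E(G)|^2}{n}.
\]
Combining the two displays gives $4|E(G)|^2/n \le n\,|E(G)|$, and dividing through by $|E(G)|$ (the case $|E(G)|=0$ being trivial) produces $|E(G)| \le n^2/4$, exactly as claimed.

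The argument is short, and I do not expect a genuine obstacle; the only point demanding a moment's care is the double-counting step $\sum_{uv}(d(u)+d(v)) = \sum_w d(w)^2$, which must be justified by noting that each edge contributes the degree of each of its two endpoints precisely once. As a cross-check, I would keep in reserve an alternative route that avoids Cauchy--Schwarz altogether: pick a vertex $v$ of maximum degree $\Delta$, note that $N(v)$ is independent (again by triangle-freeness), so every edge meets one of the $n-\Delta$ vertices outside $N(v)$, bound $|E(G)| \le (n-\Delta)\Delta$, and conclude by the AM--GM inequality $(n-\Delta)\Delta \le n^2/4$.
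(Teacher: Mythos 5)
Your proposal is correct and follows essentially the same route as the paper's first proof: the observation that $d(u)+d(v)\leqslant n$ for every edge $uv$ by triangle-freeness, the double-counting identity $\sum_{w} d(w)^2 = \sum_{uv\in E(G)}(d(u)+d(v))\leqslant n|E(G)|$, and the Cauchy--Schwarz lower bound $\sum_w d(w)^2 \geqslant 4|E(G)|^2/n$. Your reserve argument via a maximum-degree vertex is a pleasant variant of (but not identical to) the paper's second proof, which instead takes a maximum independent set $A$ and bounds $|E(G)|\leqslant |A|\cdot|B|$ over the complement $B$.
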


\begin{proof}[First proof of Theorem~\ref{thm-Mantel}]
Suppose that $G$ has $m$ edges. Let $v$ and $u$ be two vertices in $G$ that are joined by an edge. 

If $deg(x)$ is the degree of a vertex $x$, we see that $deg(v)+ deg(u)\leqslant n$. This is because every vertex in the graph $G\backslash \{ v,u\}$ is connected to at most one of $v$ and $u$. 

Note now that 
\[ \sum_{v\in V(G)} (deg(v))^2=\sum_{uv\in E(G)} (deg(u)+deg(v))\leqslant n m .\]

By Cauchy-Schwarz inequality we have 
\[\sum_{v\in V(G)} deg(v)^2\geqslant \frac{1}{n} \left( \sum_{v\in V(G)} deg(v) \right)^2 =\frac{4m^2}{n}\]
so we will have that $nm\geqslant \frac{4m^2}{n}$ and the result follows.    
\end{proof}

\begin{proof}[Second proof of Theorem~\ref{thm-Mantel}]
Let $A\subset V(G)$ be a maximum independent set of $G$.
 Consider the complement $B=V(G) \backslash A$ of the subset $A$. Every edge must have a vertex in $B$ because $A$ is a maximal independent set of $G$. Thus
\[|E(G)|\leqslant \sum_{v\in B} deg(v)\leqslant |A|\cdot|B|\leqslant \left(\frac{|A|+|B|}{2}\right)^2=\frac{n^2}{4}.\]
\end{proof}
 
 The natural generalization of this theorem to cliques of size $r$ is the following, proved by Paul Tur\'an in 1941\cite{turan1941}.

\begin{thm}[Tur\'an\cite{turan1941}]\label{thm- Turán}

If a graph $G$ on $n$ vertices contains no copy of $K_{r+1}$, the complete graph on $r+1$ vertices, then it contains at most
$\frac{n^2}{2}(1-\frac{1}{r})$ edges:
\[ex(n,K_{r+1})\leqslant \frac{n^2}{2}\left(1-\frac{1}{r}\right).\]

\end{thm}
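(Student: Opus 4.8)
The plan is to prove Turán's theorem by induction on $n$, generalizing the degree/independent-set ideas used for Mantel's theorem. The cleanest route I would take is the following. First I would establish the extremal structure: among all $K_{r+1}$-free graphs on $n$ vertices with the maximum number of edges, I would argue that a suitable maximal one is a complete multipartite graph. Concretely, I would consider an edge-maximal $K_{r+1}$-free graph $G$ and show that the relation ``$u \sim v$ iff $uv \notin E(G)$'' behaves like an equivalence relation on $V(G)$ in the extremal case. The key observation is that if $G$ is edge-maximal then non-adjacency must be transitive: if $u$ is not adjacent to $v$ and $v$ is not adjacent to $w$ but $u$ is adjacent to $w$, one can increase the edge count (or keep it the same while making progress) by redistributing edges according to degrees, without creating a $K_{r+1}$. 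This forces $G$ to be a complete multipartite graph with at most $r$ parts, since more than $r$ parts would contain a $K_{r+1}$.

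The second step is to optimize over complete multipartite graphs. Once $G = K_{n_1, \dots, n_s}$ with $s \leqslant r$, its edge count is $\binom{n}{2} - \sum_{i=1}^{s} \binom{n_i}{2}$, so maximizing edges means minimizing $\sum \binom{n_i}{2}$ subject to $\sum n_i = n$ and $s \leqslant r$. A standard smoothing/convexity argument shows this is minimized when $s = r$ and the part sizes are as equal as possible (differing by at most one). I would then verify that the resulting maximum is at most $\frac{n^2}{2}(1 - \frac{1}{r})$, which follows since the balanced Turán graph on $r$ equal parts of size $n/r$ has exactly $\binom{r}{2}(n/r)^2 = \frac{n^2}{2}(1 - \frac{1}{r})$ edges, and unequal part sizes only decrease the count relative to this real-valued optimum.

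Alternatively, I would offer a slicker inductive argument that avoids the structural classification: pick a vertex $v$ of maximum degree $\Delta$, let $N(v)$ be its neighborhood, and observe that since $G$ is $K_{r+1}$-free, the induced subgraph on $N(v)$ is $K_r$-free. Building the new graph $H$ on $V(G)$ by keeping all edges inside $N(v)$ and joining every vertex of $V(G) \setminus N(v)$ to every vertex of $N(v)$ (a complete bipartite-style replacement), one checks $H$ is still $K_{r+1}$-free and has at least as many edges as $G$; then applying the induction hypothesis to $N(v)$ (which has $\Delta \leqslant n$ vertices and is $K_r$-free) bounds the edges inside $N(v)$, and an elementary estimate on the bipartite part finishes the count. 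The main obstacle in either approach is the inductive or smoothing step where one must confirm that the edge-preserving modification never creates a $K_{r+1}$ and genuinely does not lose edges; handling the boundary arithmetic for unequal part sizes (the floor/ceiling bookkeeping) and checking that the discrete optimum stays below the clean bound $\frac{n^2}{2}(1 - \frac{1}{r})$ is the part that requires the most care, though it reduces to convexity of $\binom{x}{2}$.
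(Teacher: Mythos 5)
Your proposal is correct, but it takes a genuinely different route from the paper. The paper argues by induction on $n$: it takes an edge-maximal $K_{r+1}$-free graph $G$, notes that edge-maximality forces a copy $H$ of $K_r$, and then counts three contributions --- at most $\binom{r}{2}$ edges inside $H$, at most $(n-r)(r-1)$ edges between $H$ and the rest (each outside vertex sees at most $r-1$ clique vertices, else a $K_{r+1}$ appears), and at most $\frac{(n-r)^2}{2}\left(1-\frac{1}{r}\right)$ edges among the remaining $n-r$ vertices by the induction hypothesis --- and these telescope exactly to $\frac{n^2}{2}\left(1-\frac{1}{r}\right)$. Your first route is instead Zykov-style symmetrization: you show non-adjacency is transitive in an edge-maximum $K_{r+1}$-free graph (if $uv, vw \notin E(G)$ but $uw \in E(G)$, either duplicate the higher-degree endpoint in place of $v$, or replace $u$ and $w$ by two copies of $v$, gaining $2\deg(v)-\deg(u)-\deg(w)+1 \geqslant 1$ edges; duplication is safe because the copies are non-adjacent), conclude $G$ is complete multipartite with at most $r$ parts, and finish by convexity of $\binom{x}{2}$, since $\binom{n}{2} - r\binom{n/r}{2} = \frac{n^2}{2}\left(1-\frac{1}{r}\right)$ exactly. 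Your alternative route --- joining everything outside $N(v)$, for $v$ of maximum degree, completely to $N(v)$ and inducting on $r$ rather than on $n$ --- is the Erd\H{o}s degree-majorization proof; it works provided you observe that $V(G)\setminus N(v)$ becomes independent in $H$ (so a clique in $H$ has at most one vertex outside $N(v)$, hence $H$ is $K_{r+1}$-free) and then maximize the quadratic $\frac{\Delta^2}{2}\left(1-\frac{1}{r-1}\right)+(n-\Delta)\Delta$ over $\Delta$. What your approaches buy over the paper's: the symmetrization argument additionally identifies the extremal graphs (the balanced complete $r$-partite Tur\'an graph), which the paper's counting never exhibits, and the majorization route generalizes well. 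What the paper's buys: a much shorter argument with no structural classification and arithmetic that closes exactly. One point you should firm up is the phrase ``increase the edge count (or keep it the same while making progress)'': since you work with a graph of \emph{maximum} (not merely maximal) edge count, the two-case degree analysis gives a strict increase in both cases, and you should state that dichotomy explicitly, as it is the crux of the transitivity claim.
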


\begin{proof}[Proof of Theorem~\ref{thm- Turán}]

We argue by induction on $n$. The theorem is trivially true for $n=1,2,\ldots ,r$. We will therefore assume that it is true for all values less than $n$ and prove it for $n$. Let $G$ be a graph on $n$ vertices that contains no $K_{r+1}$ and has the maximum possible number of edges. Then $G$ contains copies of
$K_r$. Otherwise, we could add edges to $G$, contradicting the maximality of the number of edges of $G$.

Let $H$ be a clique of size $r$ of the graph $G$ and let $T$ be its complement. Since $T$ has $n-r$ vertices and does not contain $K_{r+1}$, there are at most 
\[ \frac{(n-r)^2}{2}(1-\frac{1}{r})\]
edges in $T$. Moreover, since every vertex in $T$ can have at most $r-1$ neighbors in $H$, the number of edges between $H$ and $T$ is at most $(n-r)(r-1)$. Summing, we see
that in $G$ we have at most 
$$\frac{r^2-r}{2}+(n-r)(r-1)+\frac{(n-r)^2}{2}(1-\frac{1}{r})=\frac{n^2}{2}(1-\frac{1}{r})$$
edges.
\end{proof}

\section{Avoiding bipartite graphs}
We are now going to begin an in-depth study of the extremal number for bipartite graphs. The following result was proved by K\'ov\'ari-S\'os-Tur\'an in 1954 \cite{kovari1954}

\begin{thm}[K\'ov\'ari-S\'os-Tur\'an\cite{kovari1954}]\label{thm-Kovari-Sos-Turan}

For any natural numbers $r$ and $t$ with $r\leqslant t$, there exists a constant $c$ such that:
$$ex(n,K_{r,t}) \leqslant cn^{2-\frac{1}{r}}$$

\end{thm}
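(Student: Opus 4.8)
The plan is to bound the number of "cherries" — that is, stars $K_{1,r}$ — in two different ways and compare them. The standard double-counting argument counts pairs consisting of a vertex together with an $r$-element subset of its neighbourhood. On one hand, a vertex $v$ of degree $d_v$ contributes $\binom{d_v}{r}$ such configurations, so the total is $\sum_{v} \binom{d_v}{r}$. On the other hand, each $r$-element subset $S$ of $V(G)$ can serve as the common neighbourhood of at most $t-1$ vertices: if $t$ or more vertices were all adjacent to every vertex of $S$, those $t$ vertices together with $S$ would form a copy of $K_{r,t}$, which is forbidden. Hence the total is at most $(t-1)\binom{n}{r}$.

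Combining the two counts gives the key inequality
\begin{equation}
\sum_{v\in V(G)} \binom{d_v}{r} \leqslant (t-1)\binom{n}{r}.
\end{equation}
The next step is to extract a bound on $m = |E(G)|$ from the left-hand side. Since $\binom{x}{r}$ is convex in $x$, I would apply Jensen's inequality to the average degree $\bar d = 2m/n$, obtaining
\begin{equation}
\sum_{v\in V(G)} \binom{d_v}{r} \geqslant n\binom{\bar d}{r} = n\binom{2m/n}{r}.
\end{equation}
There is a technical wrinkle here: $\binom{x}{r}$ as a polynomial is only convex for $x \geqslant r-1$, and Jensen in the form above really wants a genuinely convex function on the relevant range. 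I would handle this cleanly by noting that for the inequality to be nonvacuous we may assume $\bar d \geqslant r$ (otherwise $m$ is already $O(n)$, which is absorbed into the bound), and using the elementary estimate $\binom{d_v}{r} \geqslant \bigl(\tfrac{d_v - r + 1}{r}\bigr)^r$ or, more simply, working with the convex extension and the standard bound $\binom{d}{r} \geqslant \frac{(d-r+1)^r}{r!}$.

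After substituting, the inequality reads roughly $n \cdot \frac{(2m/n - r + 1)^r}{r!} \leqslant (t-1)\frac{n^r}{r!}$, and solving for $m$ yields $2m/n \leqslant (t-1)^{1/r} n^{1-1/r} + r - 1$, hence $m \leqslant \tfrac{1}{2}(t-1)^{1/r} n^{2-1/r} + O(n)$. Absorbing the lower-order term into the constant gives $m \leqslant c\, n^{2-1/r}$ for a suitable $c$ depending on $r$ and $t$, which is the desired bound $ex(n, K_{r,t}) \leqslant c\, n^{2-1/r}$.

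**The main obstacle** I anticipate is not the combinatorial double-count, which is routine, but the careful treatment of the convexity step: making sure the application of Jensen's inequality (equivalently, the lower bound on $\sum_v \binom{d_v}{r}$) is valid when some vertices have small degree, and cleanly disposing of the $O(n)$ error term so that it is honestly absorbed into the constant $c$. Getting the polynomial inequality manipulated into the clean form $n^{2-1/r}$ requires a little care with the binomial estimates, but presents no conceptual difficulty.
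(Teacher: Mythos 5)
Your proposal is correct and follows essentially the same route as the paper: a double count of the stars $K_{1,r}$, bounded above by $(t-1)\binom{n}{r}$ via the forbidden $K_{r,t}$ and below by convexity through the average degree, then solving for $m$. The only difference is the treatment of low-degree vertices, where your reduction (assume $\bar d \geqslant r$, else $m = O(n)$) is in fact cleaner than the paper's device of adding edges to raise minimum degrees, since added edges could in principle create a $K_{r,t}$.
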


\begin{proof}[Proof of Theorem~\ref{thm-Kovari-Sos-Turan}]

Let $G$ be a $K_{r,t}$-free graph with $n$ vertices and $m$ edges.

 Assume that all vertices in $G$ have degree at least $r-1$. We will count the number of $K_{1,r}$.

 Firsty, for any set $A$ of $r$ vertices, count the number of $K_{1,r}$ which use $A$ as the part of size $r$. There is one for each common neighbor of $A$. But $A$ has less than $t$ common neighbors, otherwise $A$ and $t$ of their common neighbors form a $K_{r,t}$. So there are at most $t-1$ of these
 $K_{1,r}$ implying that the number of $K_{1,r}$ in G is at most 
 \[(t-1)\cdot \binom{n}{r}.\]
 And we have that 
 \[(t-1)\cdot \binom{n}{r}\leqslant \frac{n^r(t-1)}{r!}.\]
 So the number of $K_{1,r}$ in G is smaller than   
 \[\frac{n^r(t-1)}{r!}.\]

 Now, for each vertex $v$, count the number of $K_{1,r}$ which use $v$ as the part of size $1$. There is one for each set of $s$ neighbors of $v$ for a total of 
\[\binom{\deg(v)}{r}.\]

 Thus the number of $K_{1,r}$ in $G$ is 
 \[\sum_{v\in V(G)}\limits \binom{deg(v)}{r}\geqslant n \binom{\frac{2m}{n}}{r}\geqslant \frac{(\frac{2m}{n}-r+1)^rn}{ r!}\]
 by convexity and the Handshake Lemma. So we have that 
 \[\frac{(\frac{2m}{n}-r+1)^rn}{ r!}<\frac{n^r(t-1)}{r!}.\]

 Thus 
 \[m\leqslant \frac{1}{2}(t-1)^{\frac{1}{r}}n^{2-\frac{1}{r}}+\frac{n}{2}(r-1)=O(n^{2-\frac{1}{r}}).\]

 Now, assume that $G$ has vertices of degree less than $r-1$. Consider the graph $G_1$ formed by adding arbitrary edges to each vertex $v$ with $deg(v) <s-1$ until $deg(v)=s-1$. The new graph $G_1$ is $K_{r,t}$-free so it satisfies $|E(G_1)|\leqslant O(n^{2-\frac{1}{r}})$ from the previous work. Since $|E(G)|\leqslant |E(G_1)|$.

\end{proof}

\begin{rmk}
   Because every bipartite graph is a subgraph of a complete bipartite graph, K\a'ov\a'ari-S\a'os-Tur\a'an gives an upper bound on $ex(n,H)$ for every bipartite graph $H$.
\end{rmk}

\section{Avoiding cycles}
We will consider the extremal problem for some of the most obvious examples of bipartite graphs, cycles of even length. The main theorem we will prove is the upper bound $ex(n, C_{2k})\leqslant cn^{1+\frac{1}{k}}$, due to Bondy and Simonovits \cite{bondy340095}.

\begin{thm}[Erdős\cite{erdos1964}]\label{thm-Erdős}
\[ ex(n,C_4)\leqslant \frac{n}{4}(\sqrt{4n-3}+1)\]
\end{thm}

\smallskip

\begin{proof}[Proof of the Theorem~\ref{thm-Erdős}]
Let $G$ be a $C_4$-free graph with $n$ vertices and $m$ edges.
Let $P$ be the number of paths of length $2$ in $G$.

Firstly, each vertex $v$ is the middle vertex of $deg(v)\choose 2$ paths of length $2$ in $G$.
\[P=\sum_{v\in V(G)}  \binom{deg(v)}{2} =\frac{1}{2}\sum_{v\in V(G)}\limits (deg(v))^2-\frac{1}{2}\sum_{v\in V(G)}\limits deg(v)\geqslant \frac{2 m^2}{n}-m\]
by Cauchy-Schwarz inequality and Handsake Lemma.

Secondly, each unordered pair of vertices has the endpoints of at most one path of length $2$. So we have that $P\leq \binom{n}{2}$.

Thus 
\[\binom{n}{2}\geqslant \frac{2 m^2}{n}-m \]
so we have that 
\[n^3-n^2\geqslant 4m^2-2mn.\]
Consequently we have that  
\[ \frac{n^2}{4}(4n-3)\geqslant (2m-\frac{n}{2})^2. \]
In conclusion 
\[m\leqslant \frac{n}{4}(\sqrt{4n-3}+1).\]
\end{proof}

\begin{thm}[Bondy-Simonovits\cite{bondy340095}]\label{thm-Bondy-Simonovits}
For all natural numbers $k \geqslant 2$, there exists a constant $c = c(k)$, such that 
\[ex(n,C_{2k})\leqslant cn^{1+\frac{1}{k}},\]
for sufficiently large $n$ (depending on $k$), where $C_{2k}$ means a simple cycle on $2k$ vertices.
\end{thm}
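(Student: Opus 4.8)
The plan is to prove the Bondy–Simonovits bound $ex(n, C_{2k}) \leqslant cn^{1+1/k}$ by controlling the number of walks of length $k$ (or equivalently paths, after cleanup) in a $C_{2k}$-free graph.

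The plan is to prove the contrapositive: I will exhibit a constant $c = c(k)$ such that any $n$-vertex graph $G$ with more than $c\,n^{1+1/k}$ edges must contain $C_{2k}$. The first move is a standard \emph{cleaning} step. If $G$ has $m$ edges and average degree $d = 2m/n$, then $G$ contains a subgraph $H$ with minimum degree $\delta(H) \geqslant d/2$: repeatedly delete any vertex of degree below $d/2$, an operation that never brings the average degree below the original value $d$, so the process terminates at a nonempty subgraph all of whose degrees are at least $d/2$. It therefore suffices to prove a clean core lemma: \emph{there is a constant $M = M(k)$ so that if $H$ has $N$ vertices and minimum degree $\delta$ with $\delta^{k} \geqslant M N$, then $H \supseteq C_{2k}$}. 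Choosing $c$ large forces $\delta(H) \geqslant d/2 > (\text{large})\,n^{1/k}$, whence $\delta^{k}$ exceeds $M N$ and the lemma delivers the forbidden cycle.

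To attack the core lemma I would count paths of length $k$ emanating from a fixed root $r \in V(H)$. Building such a path greedily, at each of the $k$ steps there are at least $\delta - k + 1$ admissible continuations avoiding the at most $k-1$ vertices already used, so provided $\delta \geqslant 2k$ there are at least $(\delta - k + 1)^{k} \geqslant (\delta/2)^{k}$ paths of length $k$ starting at $r$. Each such path ends at one of at most $N$ vertices, so by the pigeonhole principle some vertex $w$ is the endpoint of at least $(\delta/2)^{k}/N$ distinct length-$k$ paths from $r$. Under the hypothesis $\delta^{k} \geqslant M N$ this count is at least $M/2^{k}$, a quantity I can make exceed any prescribed constant by taking $M$ large.

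It remains to extract a copy of $C_{2k}$ from a large family $\mathcal{P}$ of paths of length exactly $k$ joining the fixed pair $r, w$, and this is the step I expect to be the main obstacle. If any two members of $\mathcal{P}$ are \emph{internally vertex-disjoint}, their union is exactly a cycle of length $2k$ and we are finished; the genuine difficulty is that the paths may pairwise share interior vertices, in which case gluing two of them produces only shorter even cycles rather than $C_{2k}$ itself. The resolution is purely combinatorial: one argues that a sufficiently large family of length-$k$ paths between the same two endpoints cannot all pairwise intersect in their interiors — for instance via a sunflower-type or greedy selection on the interior vertex sets, or by tracking the first and last vertices at which two paths diverge, which is precisely the mechanism behind the Bondy--Simonovits fact that dense graphs contain even cycles of every length throughout an interval. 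Turning this into an explicit threshold $M(k)$ is where the real work concentrates.

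Finally, unwinding the constants ties the argument together: the cleaning step, the lower bound $(\delta/2)^{k}$ on paths, the pigeonhole division by $N$, and the extraction threshold $M(k)$ jointly determine an admissible $c = c(k)$, yielding $ex(n, C_{2k}) \leqslant c(k)\,n^{1+1/k}$ for all sufficiently large $n$. As a consistency check, for $k = 2$ this reproduces the order $n^{3/2}$ bound already established for $C_4$ in Theorem~\ref{thm-Erdős}.
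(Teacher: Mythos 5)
Your cleaning step and the path count are fine, but the extraction step --- the one you yourself flag as ``the main obstacle'' --- is a genuine gap, and the mechanisms you suggest cannot close it, because the underlying combinatorial claim is false. For $k \geqslant 3$ there is \emph{no} threshold $M(k)$ such that $M(k)$ paths of length $k$ between fixed endpoints $r,w$ must contain two internally disjoint members. Concretely, take vertices $r, a, w$ and $b_1, \dots, b_m$ with edges $ra$, $ab_i$ and $b_i w$ for all $i$: this graph carries $m$ paths $r\,a\,b_i\,w$ of length $3$ from $r$ to $w$, any two of which share the interior vertex $a$, yet its only cycles are the $4$-cycles $a\,b_i\,w\,b_j$, so it is $C_6$-free; since $m$ is arbitrary, no constant suffices, and embedding such a configuration into a denser ambient graph shows the problem does not disappear under your minimum-degree hypothesis either. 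A sunflower argument applied to the interior vertex sets accordingly returns a sunflower with \emph{nonempty} core, and gluing two petals through the core yields only shorter even cycles, never $C_{2k}$ itself. (For $k=2$ your argument does work --- two distinct length-$2$ paths between the same endpoints are automatically internally disjoint, recovering Theorem~\ref{thm-Erdős} --- which is exactly why the difficulty is invisible in the $C_4$ case.) Repairing this requires exploiting far more structure than the mere size of the family; doing so carefully is essentially the content of the Faudree--Simonovits analysis of theta graphs, and it is the heart of the theorem, not a routine pigeonhole.

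This is also where your route diverges from the paper's. The proof given there never attempts to glue two long paths: after the same regularization (Claim~\ref{claim:1}), it runs a breadth-first search from a vertex of the min-degree subgraph and uses $C_{2k}$-freeness to show (Claim~\ref{claim:1.1}) that each level $G[V_i]$ and each bipartite layer $G[V_i,V_{i+1}]$ has average degree $O(k)$; the minimum-degree condition then forces the growth ratio $|V_{i+1}|/|V_i|$ to be of order $n^{1/k}$ at every step, so $|V_k| > n$, a contradiction. The hard combinatorial content is thus packaged into the sparsity of the BFS layers rather than into a disjoint-paths lemma (note that the paper itself states Claims~\ref{claim:1} and~\ref{claim:1.1} without proof; the latter is the genuinely nontrivial step in every known proof of this theorem). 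As it stands, your proposal establishes only that some pair $r,w$ is joined by a large constant number of length-$k$ paths, which, by the counterexample above, is strictly weaker than containing $C_{2k}$.
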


\begin{proof}[Proof of the Theorem]~\ref{thm-Bondy-Simonovits}

\begin{claim}\label{claim:1}
Every graph $G$ has a subgraph whose minimum degree is at least half the average degree of $G$.
\end{claim}

Suppose that $H$ is a $C_{2k}$ free graph on $n$ vertices ($n$ sufficiently large) with $|E(H)|>cn^{1+\frac{1}{k}}$. Then
\[d(H)= \frac{2|E(H)|}{n} > 2cn^{\frac{1}{k}},\]
where $d(H)$ denotes the average degree of $H$. 
By Claim~\ref{claim:1}, $H$ must
contain a subgraph $G$ of minimum degree 
\[\delta(G) \geqslant \frac{d(H)}{2}>cn^{\frac{1}{k}}.\]
Pick a vertex $v\in V(G)$ and perform breadth-first search starting from $v$. Define $V_0 = \{ v\}$ and $V_i = \{ u \in V(G)| l(v,u) = i\}$ for $1\leqslant  i\leqslant  k$, where $l(x, y)$ denotes the distance between $x$ and
$y$ in the tree.

Denote by $G[V_i]$ the induced subgraph on the vertex set $V_i$ and by $G[V_i, V_{i+1}]$ the bipartite subgraph induced by partitions $V_i$ and $V_{i+1}$

The main proof relies on the fact that both $G[V_i]$ and $G[V_i, V_{i+1}]$ are sparse, for all  $i$. This is formalized in the following lemma.

%
 \begin{claim}\label{claim:1.1}
 There exist constants $c_1$ in terms of $k$ and $c_2$ in terms of $k$ such that for $1\leqslant  i \leqslant  k-1$ the following
hold:
\begin{enumerate}
    \item $d(G[V_i])\leqslant  c_1k$;  (here $d(G[V_i])$ denotes the average degree of $G[V_i]$)
    \item $d(G[V_i,V_{i+1}])\leqslant  c_2k$;   (here $d(G[V_i,V_{i+1}])$ denotes the average degree of $G[V_i,V_{i+1}]$)
\end{enumerate}
\end{claim}

\begin{claim}\label{claim:2}
For $1\leqslant  k \leqslant  n$ we will have 
\[ \frac{|V_{i+1}|}{|V_i|}\geqslant  \frac{c}{2c_2}n^{\frac{1}{k}}.\]
\end{claim}

\begin{proof}[Proof of \ref{claim:2}]
We denote $|V_i|$ by $n_i$ for every $1\leqslant  k \leqslant  n$.
We induct on $i$. In the base case $i=0$, 
\[\frac{n_1}{n_0} =\frac{deg(v)}{n}\geqslant  \delta(G) > c_0n^{\frac{1}{k}}.\]
Thus if $c_2 \geqslant  \frac{1}{2}$, we are through.
For $1\leqslant  i\leqslant  k -1$, we have that:
\begin{align*}
|E(G[V_i,V_{i+1}])| & =\sum_{u\in V_i}\limits (deg(u)-deg_i(u)-deg_{i-1,i}(u)) = \\
& =(\sum_{u\in V_i}\limits deg(u))-2|E(G[V_i])|-|E(G[V_{i-1},V_i])| \geqslant \\
& \geqslant n_i\delta(G)-n_id(G[V_i])-\frac{1}{2}d(G[V_{i-1},V_i])(n_{i-1}+n_i)\geqslant \\
& \geqslant n_icn^{\frac{1}{k}}-n_ic_1k-\frac{1}{2}(n_{i-1}+n_i)c_2k \geqslant \\
& \geqslant n_icn^{\frac{1}{k}}-n_ic_1k-n_ic_2k\geqslant  \frac{c}{2}n_in^{\frac{1}{k}}
\end{align*}
for n sufficiently large, where the second inequality uses Claim~\ref{claim:1.1} and fact 
that $n_i\geqslant  n_{i-1}$ which comes by induction and that $n$ is large. Here $deg_i(u)$ denotes the degree of $u$ to vertices in $V_i$ and $deg_{i-1,i}(u)$ denotes its degree
to vertices in $V_{i-1}$.
Also we have  that:

\[|E(G[V_i,V_{i+1}])|=\frac{1}{2}d(G[V_i,V_{i+1}])(n_i+n_{i+1})\leqslant  \frac{1}{2}(n_i+n_{i+1})c_2k\] 
by Claim~\ref{claim:1.1}.

Combining both inequalities, we get 
\[\frac{1}{2}(n_i+n_{i+1})c_2k\geq \frac{c}{2}n_in^{\frac{1}{k}}\]
so 
\[\frac{n_{i+1}}{n_i}\geq \frac{c}{c_2k}n^{\frac{1}{k}}-1\geq \frac{c}{2c_2k}n^{\frac{1}{k}}\]
for $n$ sufficiently large. So the lemma is proved.
\end{proof}

Applying the lemma, we get that 
\[n_k\geq (\frac{c}{2c_2}n^{\frac{1}{k}})^k=(\frac{c}{2c_2})^kn\]
and   this is a contradiction.
\end{proof}

\begin{rmk}
We see that this gives a better upper bound than that obtained by excluding copies of $K_{k,k}$, best known to be $O(n^{2-\frac{1}{k}})$.
\end{rmk}

\section{Avoiding general subgraphs}
We are now going to deal with the general case. We will show that the behaviour of the extremal function $ex(n, H)$ is tied intimately to the chromatic number of the graph $H$. To prove the Erdős-Stone-Simonovits theorem, we will first prove the following lemma in an original and elementary way. A more general variant of this lemma appears in \cite{nikiforov2008} and \cite{Nikiforov2007SpectralESB}, but our proof is original and, to the best of our knowledge, has not been published elsewhere.

\begin{lem}\label{lem-Lemma}

For any natural numbers $r$ and $t$ and any positive $\epsilon$ with $\epsilon < \frac{1}{r}$, there exists an $N$
such that the following holds: any graph $G$ with $n\geqslant N$ vertices and $(1-\frac{1}{r}+\epsilon)\frac{n^2}{2}$ edges contains $r + 1$ disjoint sets of vertices $W_1, . . . , W_{r+1}$ of size $t$ such that the graph between $W_i$ and $W_j$, for every $1\leqslant i < j\leqslant r + 1$, is complete.

\end{lem}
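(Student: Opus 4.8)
The plan is to reduce to a minimum-degree hypothesis and then induct on $r$, using the Kővári--Sós--Turán theorem (Theorem~\ref{thm-Kovari-Sos-Turan}) as the base case and neighbourhoods as the engine of the induction. First I would clean up $G$: starting from $G$ and repeatedly deleting any vertex whose current degree is below $(1-\tfrac1r+\tfrac\epsilon2)$ times the current order, one deletes fewer than $(1-\tfrac1r+\tfrac\epsilon2)\tfrac{n^2}{2}$ edges in total (summing $(1-\tfrac1r+\tfrac\epsilon2)j$ over the decreasing orders $j$), so more than $\tfrac\epsilon2\cdot\tfrac{n^2}{2}$ edges survive and the process cannot exhaust the graph. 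It stops at an induced subgraph $G'$ on $m$ vertices with $m\geq\sqrt{\epsilon/2}\,n\to\infty$ and $\delta(G')\geq(1-\tfrac1r+\tfrac\epsilon2)m$. It then suffices to prove the minimum-degree form: for all $r,t$ and $\delta>0$ there is $M$ so that any graph on $m\geq M$ vertices with $\delta(\cdot)\geq(1-\tfrac1r+\delta)m$ contains $K_{r+1}(t)$, the complete $(r+1)$-partite graph with parts of size $t$.

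For the base case $r=1$, minimum degree $\geq\delta m$ yields at least $\delta m^2/2$ edges, which for large $m$ exceeds the Kővári--Sós--Turán bound $c\,m^{2-1/t}$ and hence forces a copy of $K_{t,t}=K_{2}(t)$. For the inductive step the key observation is a threshold-matching computation on neighbourhoods. Fix a vertex $v$; any $u\in N(v)$ has at most $(\tfrac1r-\delta)m$ non-neighbours in $G'$, so $\deg_{G'[N(v)]}(u)\geq|N(v)|-(\tfrac1r-\delta)m$, and since $|N(v)|\geq(1-\tfrac1r+\delta)m$,
\[\frac{\deg_{G'[N(v)]}(u)}{|N(v)|}\ge 1-\frac{(\tfrac1r-\delta)m}{(1-\tfrac1r+\delta)m}> 1-\frac{1}{r-1},\]
where the final inequality is an equality at $\delta=0$ and strict for $\delta>0$. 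Thus every neighbourhood $G'[N(v)]$ has minimum degree at least $(1-\tfrac1{r-1}+\delta')|N(v)|$ for some $\delta'=\delta'(r,\delta)>0$ on $|N(v)|\to\infty$ vertices, so by the inductive hypothesis each $G'[N(v)]$ contains a copy of $K_r(t)$.

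The remaining, and hardest, step is to enlarge the last class from size $1$ to size $t$: a single apex $v$ over a $K_r(t)\subseteq N(v)$ only produces a $K_{r+1}$ with one class of size $1$, whereas I need $t$ apices sharing one common $K_r(t)$. Here I would pass from one copy to many copies. By the standard averaging bootstrap of Erdős--Simonovits supersaturation -- which follows elementarily from the single-copy inductive statement applied to all subsets of some fixed bounded order -- each $G'[N(v)]$ contains at least $c\,|N(v)|^{rt}\geq c'm^{rt}$ copies of $K_r(t)$. Double counting incidences between copies of $K_r(t)$ and their common neighbours then gives at least $c'm^{rt+1}$ incident pairs, while $G'$ has at most $m^{rt}$ copies of $K_r(t)$ in all; hence some copy on a vertex set $S=S_1\cup\dots\cup S_r$ has at least $c'm\geq t$ common neighbours once $m$ is large. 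Setting $W_i=S_i$ for $1\le i\le r$ and taking $W_{r+1}$ to be $t$ of those common neighbours yields $K_{r+1}(t)$.

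I expect the supersaturation and double-counting step to be the main obstacle: the neighbourhood computation hands over $K_r(t)$ with a size-$1$ final class essentially for free, but widening that class to size $t$ genuinely requires controlling the \emph{number} of copies, and a counting argument rather than a purely structural one seems unavoidable there. A secondary point needing care is bookkeeping of how $\delta'$ and the threshold $M$ grow with $r$ through the induction, so that a single $N$ can be extracted for the original edge-count hypothesis of the lemma.
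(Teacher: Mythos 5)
Your proposal is correct, but it takes a genuinely different route from the paper's. The two arguments coincide in the first step: the paper performs exactly your vertex-deletion cleanup to pass to a subgraph on $p=\Omega(n)$ vertices with minimum degree at least $(1-\frac{1}{r}+\frac{\epsilon}{2})p$. They then diverge. The paper inducts on $r$ \emph{inside this one fixed graph} $H$ (base case $r=0$, trivial; no Kővári--Sós--Turán needed): it applies the inductive hypothesis with \emph{oversized} parts $s=\lceil \frac{3t}{\epsilon}\rceil$ to obtain $L_1,\dots,L_r$ pairwise completely joined, counts missing edges to show that the set $R$ of vertices having at least $t$ neighbours in every $L_i$ has size linear in $p$, and then pigeonholes over the $\binom{s}{t}^r$ possible trace patterns to extract $t$-subsets $W_i\subseteq L_i$ together with $t$ vertices of $R$ sharing that trace, which form $W_{r+1}$. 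In other words, the paper solves exactly the difficulty you identify --- widening the apex class from size $1$ to size $t$ --- purely structurally: your ``many copies of $K_r(t)$'' are replaced by ``one copy with big parts,'' and your double counting of (copy, common neighbour) incidences by a pigeonhole on $t$-subset traces. Your route instead recurses into neighbourhoods (the computation $\deg_{G'[N(v)]}(u)/|N(v)|>1-\frac{1}{r-1}$ for $\delta>0$ is right, with a KST base case at $r=1$), gets one $K_r(t)$ per neighbourhood, and buys the width of the last class via supersaturation plus double counting; that step is also sound, since some copy then has $c'm\geqslant t$ common neighbours, automatically disjoint from the copy itself. What each buys: the paper's pigeonhole is more self-contained, with all constants explicit and no copy-counting anywhere; yours is more modular and standard, but carries one extra dependency you should make explicit --- the averaging bootstrap requires the \emph{edge-density} form of the level-$(r-1)$ statement applied to the positive fraction of dense $q$-subsets (not literally ``all subsets of some fixed bounded order,'' since bounded subsets do not inherit the minimum-degree condition), and that form must be manufactured inside the induction by re-running your cleanup reduction at level $r-1$. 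With that dependency stated and the constant bookkeeping you already flag, your argument is a complete and valid alternative proof.
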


\begin{proof} [Proof of the Lemma~\ref{lem-Lemma}]

 To begin, we find a subgraph $H$ of $G$ such that every degree in $H$ is at least 
 \[(1-\frac{1}{r}+\frac{\epsilon}{2})|V(H)|.\]
To find such a graph, we remove one vertex at a proper time. If, in this process, we reach a graph with $m$ vertices and there is some vertex which has fewer than $(1-\frac{1}{r}+\frac{\epsilon}{2})m$ neighbors in this graph, we remove it.
Suppose that this process terminates when we have reached a graph $H$ with $p$ vertices. To show that $p$ is not too small, consider $x$ be the total number of edges that have been removed from the graph. When the graph has $m$ vertices, we remove at most $ (1-\frac{1}{r}+\frac{\epsilon}{2})m$ edges. Therefore, the total number of edges removed is: 
\[\sum_{i=p+1}^n\limits (1-\frac{1}{r}+\frac{\epsilon}{2})i=(1-\frac{1}{r}+\frac{\epsilon}{2})\frac{(p+n+1)(n-p)}{2}\leqslant (1-\frac{1}{r}+\frac{\epsilon}{2})\frac{n^2-p^2}{2}+\frac{n-p}{2}.\]

We know that $|E(H)|\leqslant \frac{p^2}{2}$ so we have 
\begin{align*}|E(G)| & \leqslant   \frac{p^2}{2}+(1-\frac{1}{r}+\frac{\epsilon}{2})\frac{n^2-p^2}{2}+\frac{n-p}{2} = \\
& = (1-\frac{1}{r}+\frac{\epsilon}{2})\frac{n^2}{2}+(\frac{1}{r}-\frac{\epsilon}{2})\frac{p^2}{2}+\frac{n-p}{2}.
\end{align*}

But we also have 
\[|E(G)|\geqslant (1-\frac{1}{r}+\epsilon)\frac{n^2}{2}.
\]

Therefore, the process stops once

\[(1-\frac{1}{r}+\epsilon)\frac{n^2}{2}> (1-\frac{1}{r}+\frac{\epsilon}{2})\frac{n^2}{2}+(\frac{1}{r}-\frac{\epsilon}{2})\frac{p^2}{2}+\frac{n-p}{2}\]
equivalent with 
\[\epsilon\frac{n^2}{4}-\frac{n}{2}> (\frac{1}{r}-\frac{\epsilon}{2})\frac{p^2}{2}-\frac{p}{2} \]
and this is clearly true for every $p\leqslant \frac{r\epsilon}{3}n$. From now on, we will assume that we are working within this large well behaved subgraph $H$ of graph $G$.

We will show, by induction on $r$, that there are $r+1$ sets $W_1, W_2, . . . , W_{r+1}$ of size $t$ such that every
edge between $W_i$ and $W_j$ , with $1\leqslant i < j \leqslant r + 1$, is in $H$. For $r = 0$, there is nothing to prove.
Given $r > 0$ and $s = \lceil \frac{3t}{\epsilon} \rceil$, we apply the induction hypothesis to find $r$ disjoint sets $L_1, L_2, . . . ,L_r$ of size $s$ such that the graph between every two disjoint sets is complete. Let 
\[U = V(H) \backslash \{L_1\cup \ldots \cup L_r\}\]
and let $R$ be the set of vertices in $U$ which are adjacent to at least $t$ vertices in each $L_i$.
We are going to estimate the number of edges missing between $U$ and $L_1\cup\ldots\cup L_r$. Since every vertex
in $U \backslash R$ is adjacent to fewer than $t$ vertices in some $L_i$, we have that the number of missing edges is
at least 
\[ |U\backslash R|(s-t)\geqslant (p-rs-|R|)(1-\frac{\epsilon}{3})s.\]

On the other hand, every vertex in $H$ has at most missing $ (\frac{1}{r}-\frac{\epsilon}{2})p$ edges. Therefore, counting over all vertices in $L_1\cup \ldots \cup  L_r$, we have at most  $ rs(\frac{1}{r}-\frac{\epsilon}{2})p$ missing edges  between $U$ and $L_1\cup\ldots\cup L_r$.
Therefoe
\[rs(\frac{1}{r}-\frac{\epsilon}{2})p\geqslant (p-rs-|R|)(1-\frac{\epsilon}{3})s\]
so we have that 
\[|R|(1-\frac{\epsilon}{3}) \geqslant (p-rs)(1-\frac{\epsilon}{3})-r(\frac{1}{r}-\frac{\epsilon}{2})p\] 
so 
\[|R|(1-\frac{\epsilon}{3})\geqslant \epsilon(\frac{r}{2}-\frac{1}{3})p-(1-\frac{\epsilon}{3}).\]

Since $\epsilon$, $r$ and $s$ are constants, we can make $|R|$ large by making $p$ large (by making $N$ larger). In particular, we may make $|R|$ such that $|R|>{\binom{s}{t}}^r(t-1)$.

Every element in $ R$ has at least $t$ neighbors in each $L_i$. There are at most ${\binom{s}{t}}^r$ ways to choose a
$t$-element subset from each of $L_1\cup \ldots\cup L_r$. By the pigeonhole principle and the size of $|R|$, there must be some subsets $W_1, \ldots , W_r$ and a set $W_{r+1}$ of size $t$ from $R$ such that every vertex in $W_{r+1}$
is connected to every vertex in $W_1\cup \ldots \cup W_r$. Since $W_1, \ldots , W_r$ are already joined in the appropriate
manner, this completes the proof.
\end{proof}

We can now quickly deduce the Erdős-Stone-Simonovits theorem.

\begin{thm}[Erdős-Stone-Simonovits]\label{thm-Erdős-Stone-Simonovits}

For any fixed graph $G$ and any fixed $\epsilon >0$, there is N
such that, for any $n\geqslant N$, we have 
\[ex(n,G)\leqslant \frac{n^2}{2}(1-\frac{1}{\chi(G)-1}+\epsilon).\]
\end{thm}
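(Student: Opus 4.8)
The plan is to derive the theorem directly from Lemma~\ref{lem-Lemma} by a contradiction argument, exploiting the fact that the chromatic number controls embeddability into complete multipartite graphs. Write $\chi = \chi(G)$ and set $r = \chi - 1$. First I would dispose of the trivial range: if $\epsilon \geqslant \frac{1}{r}$, then $\frac{n^2}{2}(1 - \frac{1}{r} + \epsilon) \geqslant \frac{n^2}{2} > \binom{n}{2}$, so the asserted bound holds automatically for every $n$-vertex graph. Hence I may assume $\epsilon < \frac{1}{r}$, which is precisely the hypothesis required to invoke the lemma.

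Next, fix $t = |V(G)|$ and let $N$ be the constant supplied by Lemma~\ref{lem-Lemma} for these values of $r$, $t$, and $\epsilon$. Suppose, for contradiction, that for some $n \geqslant N$ there is a $G$-free graph $\Gamma$ on $n$ vertices with more than $\frac{n^2}{2}(1 - \frac{1}{r} + \epsilon)$ edges. Applying the lemma to $\Gamma$ yields $r + 1 = \chi$ pairwise disjoint vertex sets $W_1, \ldots, W_{\chi}$, each of size $t$, such that every pair $W_i, W_j$ with $i < j$ spans a complete bipartite graph. In other words, $\Gamma$ contains the complete $\chi$-partite graph with all parts of size $t$.

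The crux is then to embed $G$ into this complete multipartite subgraph. Since $\chi(G) = \chi$, the vertex set of $G$ partitions into $\chi$ color classes $C_1, \ldots, C_{\chi}$, each of which is independent, with $|C_j| \leqslant |V(G)| = t$. Mapping $C_j$ injectively into $W_j$ realizes $G$ as a subgraph: every edge of $G$ joins two distinct color classes, and all such edges are present between the corresponding $W_i$ and $W_j$. Thus $\Gamma$ contains a copy of $G$, contradicting that $\Gamma$ is $G$-free, and the bound $ex(n, G) \leqslant \frac{n^2}{2}(1 - \frac{1}{\chi - 1} + \epsilon)$ follows for all $n \geqslant N$.

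I do not anticipate a genuine obstacle, since Lemma~\ref{lem-Lemma} was engineered precisely to produce the multipartite structure; the only real content is the elementary observation that a $\chi$-chromatic graph embeds into the complete $\chi$-partite graph whose parts have size at least $|V(G)|$. The single point demanding a little care is the degenerate case $\chi(G) = 1$, i.e.\ an edgeless $G$, where $\frac{1}{\chi - 1}$ is undefined and the statement must be read as vacuous and excluded. For $\chi(G) = 2$ the argument specializes correctly to the bipartite case, giving an $o(n^2)$ bound consistent with the K\'ov\'ari--S\'os--Tur\'an estimate of Theorem~\ref{thm-Kovari-Sos-Turan}.
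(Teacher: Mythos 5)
Your proposal is correct and takes essentially the same approach as the paper: set $r=\chi(G)-1$ and $t=|V(G)|$, apply Lemma~\ref{lem-Lemma} to a supposed counterexample, and embed each color class of $G$ injectively into a distinct part $W_i$ of the resulting complete multipartite structure. The only difference is that you make explicit some details the paper leaves implicit, namely the trivial range $\epsilon \geqslant \frac{1}{\chi(G)-1}$ (needed so the lemma's hypothesis $\epsilon < \frac{1}{r}$ applies) and the degenerate case $\chi(G)=1$.
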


\smallskip

\begin{proof}[Proof of the Theorem]~\ref{thm-Erdős-Stone-Simonovits} Note that if $ G$ has chromatic number $\chi(G)$, then, provided $t$ is large enough,
it can be embedded in a graph $H$ consisting of $\chi(G)$ sets $W_1, W_2, \ldots, W_{\chi(G)}$ of size $t$ such that the
graph between any two disjoint $W_i$ and $W_j$ is complete. We simply embed any given color class into
a different $W_i$. The theorem now follows from an application of the previous Lemma.

\end{proof}

\nocite*


\end{document}